\titleformat{\section}{\Large}{\textbf{\thesection .}}{1em}{\textbf{#1}}
\newtheorem{thm}{Theorem}[section]
\newtheorem{cor}[thm]{Corollary}
\newtheorem{prop}[thm]{Proposition}
\theoremstyle{definition}
\newtheorem{defn}[thm]{Definition}
\theoremstyle{remark}
\numberwithin{equation}{section}
\renewenvironment{proof}{{\vspace{-1em}\quad \textit{Proof.}}}{\hfill$\square$}
\begin{document}
\renewcommand{\thefootnote}{\fnsymbol{footnote}}
\title{Forbidden Substrings In Circular $K$-Successions}%
\author{\vspace{-3,5\baselineskip}Enrique Navarrete$^{\ast}$}\footnotetext[1]{Grupo ANFI, Universidad de Antioquia.}%
\date{}%
\renewcommand*{\thefootnote}{\arabic{footnote}}
\makeatletter
\def\maketitle{%
\bgroup
\par\centering{\LARGE\@title}\\[3em]%
\ \par
{\@author}\\[4em]%
\egroup
}

\maketitle
\begin{abstract}
    In this note we define a circular $k$-succession in a permutation $p$ on $[n]$ as either a pair $p(i)$, $p(i + 1)$ if $p(i + 1) \equiv p(i) + k\ (\text{mod }n)$, or as the pair $p(n)$, $p(1)$ if $p(1) \equiv p(n) + k\ (\text{mod }n)$.  We count the number of permutations that for fixed $k$, $k < n$, avoid substrings $j(j+k)$,\linebreak \mbox{$1 \leq j \leq n-k$}, as well as permutations that avoid substrings\linebreak $j(j+k)\ (\text{mod }n)$ for all $j$, $1 \leq j \leq n$.  We also count circular permutations that avoid such substrings, and show that for substrings $j(j+k)\ (\text{mod }n)$, the number of permutations depends on whether $n$ is prime, and more generally, on whether $n$ and $k$ are relatively prime.\\[1em]
  \textit{Keywords}: Circular permutations, circular successions, circular\linebreak $k$-successions, $k$-shifts, $k$-successions, derangements, forbidden substrings, bijections.
\end{abstract}
\section{Introduction and Previous Results}
In [3] we counted the number of permutations according to $k$-shifts, where we defined $\{d^k_n\}$ as the set of permutations on $[n]$ that for fixed $k$, $k < n$, avoid substrings $j(j+k)$, $1 \leq j \leq n-k$.  The number of such permutations, $d^k_n$, turned out to be
\begin{equation}\label{equ11}
  d^k_n=\sum^{n-k}_{j=0}(-1)^j\binom{n-k}{j}(n-j!).
\end{equation}

The forbidden substrings in these permutations can be pictured as a diagonal running $k$ places to the right of the main diagonal of an $n\times n$ chessboard (hence the term \textquotedblleft $k$-shifts\textquotedblright). Note that we will also refer to these forbidden substrings that are $k$ spacings apart as \textquotedblleft $k$-successions\textquotedblright; therefore in [3] we counted permutations with no $k$-successions for $k > 1$ (the case $k = 1$ was discussed in [2]).  Note that there are some references that not only count permutations with no successions but also count permutations with\linebreak $i = 1, 2, \ldots, n-1$ successions (the so-called \textquotedblleft succession numbers\textquotedblright) but they do it only for the case $k = 1$, \textit{ie}. for forbidden substrings that are only one spacing apart, $j(j+1)$, so some care should be taken (see [1], for example).

Table \ref{tab1} in the Appendix provides some $d^k_n$ values.  For example, for $n = 4$, $k = 2$, the forbidden substrings that are 2 spacings apart are $\{13, 24\}$, and $d^2_4= 14$ since there are 14 permutations in $S_4$ that avoid such substrings or 2-successions.

The sequence $\langle d^k_n\rangle$ is available in OEIS. For example, for $k = 4$, the sequence is now A277563 [4], and for $k = 1$ it is A000255 [5]. Note that for $k = 0$, $d^k_n= Der(n)$, the $n$th derangement number, which is A000166 in OEIS [9].
	
In [3] we also counted the number of permutations according to $k$-shifts $(\text{mod }n)$, where we defined $\{D^k_n\}$ as the set of permutations on $[n]$ that for fixed $k$, $k < n$, avoid substrings $j(j+k)$ for $1 \leq j\leq n-k$, and avoid substrings $j(j+k)\ (\text{mod }n)$ for $n-k < j \leq n$. Note that we can summarize in the single definition \textquotedblleft avoid substrings $j(j+k)\ (\text{mod }n)$ for all $j$, $1 \leq j \leq n$\textquotedblright\  if we agree to write $n$ instead of 0 when doing addition $(\text{mod }n)$.

The forbidden substrings in this case are easily seen along an $n\times n$ chessboard, where for $j > n - k$, the forbidden positions start again from the first column along a diagonal $n-k$ places below the main diagonal, as in Figure 1 below.

\begin{figure}[h!]
  \centering
  \begin{tabular}{c|c|c|c|c|}
    \multicolumn{1}{c}{} & \multicolumn{1}{c}{1} & \multicolumn{1}{c}{2} & \multicolumn{1}{c}{3} & \multicolumn{1}{c}{4} \\
    \cline{2-5}
    1 &  &  &  & $\times$ \\
    \cline{2-5}
    2 & $\times$ &  &  &  \\
    \cline{2-5}
    3 &  & $\times$ &  &  \\
    \cline{2-5}
    4 &  &  & $\times$ &  \\
    \cline{2-5}
  \end{tabular}
  \\
  \caption{Forbidden positions in $\{D^3_4\}$.}\label{fig1}
\end{figure}

Figure \ref{fig1} shows forbidden positions on a $4\times 4$ chessboard that correspond to forbidden substrings of permutations in $\{D^3_4\}$. These forbidden substrings are $\{14; 21, 32, 43\}$. The forbidden substrings below the diagonal are separated by a semicolon; these are the forbidden substrings $j(j+k)\ (\text{mod }n)$ for $n-k < j \leq n$. Note that while there are only $n - k$ forbidden substrings in $\{d^k_n\}$, there are $n$ forbidden substrings in $\{D^k_n\}$.

The number of permutations in $\{D^k_n\}$ is given by
\begin{equation}\label{equ22}
  D^k_n=\sum^{n-1}_{j=0}(-1)^j\binom{n}{j}(n-j)!.
\end{equation}
Furthermore, there are the same number of permutations in $\{D^k_n\}$ as in $\{D_n\}$ whenever $n$ and $k$ are relatively prime ($D_n =D^1_n$; see [3]).  Table \ref{tab4} in the Appendix provides a table for some $D^k_n$ values. For example, for $n = 4$, $k = 3$, we have seen that forbidden substrings in $\{D^3_4\}$ are $\{14; 21, 32, 43\}$ and $D^3_4= 4$ since there are 4 permutations in $S_4$ that avoid such substrings or 3-successions. Furthermore, since $(4, 3) = 1$, there is the same number of permutations in $\{D^3_4\}$ as in $\{D_4\}$, which are permutations in $S_4$ that avoid successions $\{12, 23, 34; 41\}$.  The sequence $\langle D_n\rangle$ is A000240 in OEIS [6].
\newpage
In this note we will discuss permutations in $\{d^k_n\}$ and $\{D^k_n\}$ with no \textit{circular }$k$-successions, as described in the following definition.
\begin{defn}\label{def1}
We define a $circular\ k\text{-}succession$ in a permutation $p$ on $[n]$ as either a pair $p(i)$, $p(i + 1)$ if $p(i + 1) \equiv p(i) + k\ (\text{mod }n)$, or as the pair $p(n)$, $p(1)$ if $p(1) \equiv p(n) + k\ (\text{mod }n)$. We will denote as $\{{d^\ast}^k_n\}$ the set of permutations in $\{d^k_n\}$ that have no circular $k$-successions, and similarly, as $\{{D^\ast}^k_n\}$ the set of permutations in $\{D^k_n\}$ that have no circular $k$-successions. These permutations are in one-line notation.
\end{defn}
Hence permutations in $\{{d^\ast}^k_n\}$ avoid substrings $j(j+k)$, $1 \leq j \leq n-k$, and permutations in $\{{D^\ast}^k_n\}$ avoid substrings $j(j+k)\ (\text{mod }n)$ for all $j$, $1 \leq j \leq n$ (as in $\{d^k_n\}$ and $\{D^k_n\}$, respectively), but in $\{{d^\ast}^k_n\}$ and $\{{D^\ast}^k_n\}$ there is the additional restriction of the circular $k$-successions.

For example, in $\{D^3_4\}$, we saw above that the forbidden substrings are\linebreak $\{14; 21, 32, 43\}$, hence the permutation 2134 has a circular 3-succession 21 and the permutation 2413 has a circular 3-succession 32.  Hence such permutations won't be allowed in $\{{D^\ast}^3_4\}$ (but 2413 is allowed in $\{D^3_4\}$). Note as mentioned above that the forbidden substrings in
$\{{D^\ast}^k_n\}$ are the same as those in $\{D^k_n\}$ and the same is true for $\{{d^\ast}^k_n\}$ and $\{d^k_n\}$, with differences in cardinalities being accounted by circular $k$-successions, as will be shown below.

Note that in Definition \ref{def1} we don't really need the definition $(\text{mod }n)$ in $\{{d^\ast}^k_n\}$ since forbidden positions in those permutations stay above the main diagonal of an $n\times n$ chessboard.
\section{Main Lemmas and Propositions}
In [2] we divided permutations according to their starting digit and called these groups \textit{classes}. We showed that permutations in $\{D_n\}$ are equidistributed, meaning that all classes are equinumerous. This is also true for deranged permutations (see [2]).

Now we note that in general $\{{d^\ast}^k_n\}$, which forbids permutations with circular $k$-successions, will have less permutations than $\{d^k_n\}$.  However, if we look at individual classes starting with digits $1, 2, \ldots, n$, we see that there will be classes in $\{d^k_n\}$ and $\{{d^\ast}^k_n\}$ that have the same number of elements, hence they are the same.

Indeed, since forbidden substrings in $\{d^k_n\}$ run strictly over the main diagonal, there will be substrings $j(j+k)$ where $(j+k) \neq i$, $i = 1, 2, \ldots, k$, hence $\{{d^\ast}^k_n\}$ will not have circular $k$-successions starting with $i = 1, 2, \ldots, k$. Hence classes in $\{d^k_n\}$ and $\{{d^\ast}^k_n\}$ starting with these digits will be the same, and classes starting with $i = k + 1, \ldots, n$ will be smaller in $\{{d^\ast}^k_n\}$ than in $\{d^k_n\}$. Hence a total of $k$ classes will be the same in $\{{d^\ast}^k_n\}$ and in $\{d^k_n\}$, and $n-k$ classes will be smaller in $\{{d^\ast}^k_n\}$ than in $\{d^k_n\}$, where $n-k$ is the number of forbidden substrings in $\{d^k_n\}$ and $\{{d^\ast}^k_n\}$.

For example, for $n = 4$, $k = 3$, the forbidden substring in $\{d^3_4\}$ is $\{14\}$, so there won't be permutations in $\{{d^\ast}^3_4\}$ ending in 1 and starting with 4, since they would represent circular 3-successions, which are forbidden. Hence the class of permutations starting with 4 is smaller in $\{{d^\ast}^3_4\}$ than in $\{d^3_4\}$ and the other classes starting with $i = 1, 2, 3$ are equal in both cases. In fact, the cardinalities of the four classes in $\{{d^\ast}^3_4\}$ are $\{4, 4, 4, 4\}$, while the cardinalities of the four classes in $\{d^3_4\}$ are $\{4, 4, 4, 6\}$, the difference being accounted by the permutations 4231 and 4321 which are allowed in $\{d^3_4\}$ but not in $\{{d^\ast}^3_4\}$.	

Similarly, for $n = 6$, $k = 2$, forbidden substrings in $\{d^2_6\}$ are $\{13, 24, 35, 46\}$, so there will be forbidden circular 2-successions in $\{{d^\ast}^2_6\}$ starting with 3, 4, 5, 6 but not with 1 and 2. Therefore, the classes in $\{d^2_6\}$ and in $\{{d^\ast}^2_6\}$ starting with 1 and 2 will be the same, and the other classes will be smaller in $\{{d^\ast}^2_6\}$ than in $\{d^2_6\}$. In fact, the cardinalities of the six classes in $\{d^2_6\}$ are $\{53, 53, 64, 64, 64, 64\}$, for a total of 362 permutations, while the cardinality of the classes in $\{{d^\ast}^2_6\}$ is 53 in all six cases, for a total of 318.  Hence there are 318 permutations in $S_6$ that have no circular 2-successions, \textit{ie}. permutations that avoid substrings such as 13 in 213456 or in 345621.

From these examples one might suspect that permutations in $\{{d^\ast}^k_n\}$ are\linebreak equidistributed (equinumerous), a conjecture which is confirmed in the next section.
\subsection{Results for permutations in $\boldsymbol{\{{d^\ast}^k_n\}}$}
\begin{prop}\label{propo21}
 For fixed $k$, $1 \leq k \leq n-1$, if ${d^\ast}^k_n$ denotes the number of permutations on $[n]$ that avoid circular $k$-successions, then ${d^\ast}^k_n= nd^{k-1}_{n-1}$.
\end{prop}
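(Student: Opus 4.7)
The plan is to prove the identity by inclusion-exclusion on the forbidden circular $k$-successions. For each $j \in \{1, \ldots, n-k\}$, let $A_j$ denote the set of permutations $p \in S_n$ in which the pair $j(j+k)$ appears as a circular $k$-succession, i.e., either $p(i)p(i+1) = j(j+k)$ for some $1 \le i \le n-1$, or $p(n)p(1) = j(j+k)$. By the comment following Definition \ref{def1}, the mod $n$ clause contributes nothing new here (forbidden positions stay above the main diagonal), so $\{{d^\ast}^k_n\} = S_n \setminus \bigcup_{j=1}^{n-k} A_j$ and
$${d^\ast}^k_n \;=\; \sum_{T \subseteq [n-k]} (-1)^{|T|}\, \Bigl|\bigcap_{j \in T} A_j\Bigr|.$$

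For a fixed $T$ with $|T| = s$, I would argue that $|\bigcap_{j \in T} A_j| = n(n-s-1)!$. The forced edges $\{(j, j+k) : j \in T\}$ form a subgraph of the $k$-shift digraph on $[n]$, in which every vertex has at most one outgoing and one incoming edge; hence these $s$ edges automatically decompose into vertex-disjoint directed paths and the $s$ adjacency constraints are simultaneously realizable. Collapsing each maximal forced path into a single ordered block partitions $[n]$ into $n-s$ blocks, which admit $(n-s-1)!$ distinct cyclic orderings. Each cyclic ordering then produces exactly $n$ linear permutations of $[n]$, one for each choice of which of the $n$ actual positions to read first: starting in the middle of a block is legitimate because this merely transfers the corresponding forced adjacency from an interior position $p(i)p(i+1)$ into the wrap-around $p(n)p(1)$, and both count as a circular $k$-succession.

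Substituting into the PIE sum yields
$${d^\ast}^k_n \;=\; n \sum_{s=0}^{n-k} (-1)^s \binom{n-k}{s}(n-1-s)!,$$
and identifying the right-hand sum with (\ref{equ11}) evaluated at parameters $(n-1, k-1)$ in place of $(n, k)$ gives ${d^\ast}^k_n = n\,d^{k-1}_{n-1}$. The delicate point is the factor $n$ rather than $n-s$ in $|\bigcap_{j \in T} A_j|$: treating the $n-s$ blocks as ordinary elements and simply listing their $(n-s)!$ linear orderings would reduce the whole calculation to $d^k_n$ and lose the circular information. What rescues the correct count is recognizing that the extra rotations which cut through a block are precisely the permutations whose forced adjacency is realized across the wrap-around, and these must also be included in $A_j$.
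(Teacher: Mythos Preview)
Your proof is correct and follows essentially the same inclusion--exclusion argument as the paper's: both arrive at ${d^\ast}^k_n = n\sum_{s=0}^{n-k}(-1)^s\binom{n-k}{s}(n-s-1)!$ and then identify the sum with $d^{k-1}_{n-1}$ via~(\ref{equ11}). The only presentational difference is that the paper first extracts the factor $n$ by passing to circular permutations (using the rotation-invariance of $\{{d^\ast}^k_n\}$) and then applies PIE there, whereas you apply PIE directly to linear permutations and observe the factor $n$ inside each term $\bigl|\bigcap_{j\in T}A_j\bigr|=n(n-s-1)!$; your explicit justification that this factor is $n$ rather than $n-s$ (because cutting through a block realizes the forced adjacency at the wrap-around) is a detail the paper's proof leaves implicit.
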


\begin{proof}
Consider a class in $\{{d^\ast}^k_n\}$ where both $\{{d^\ast}^k_n\}$ and $\{d^k_n\}$ have the same number of elements (there is at least one such class for $1 \leq k \leq n-1$), and denote the cardinality of such a class by $c$. Since a circular permutation is an $n$-to-1 mapping with respect to a linear one, we have that ${d^\ast}^k_n= n\cdot c$. By inclusion-exclusion, it is straightforward to show that
\begin{equation}\label{equ23}
  c=\sum^{n-k}_{j=0}(-1)^j\binom{n-k}{j}(n-j-1)!
\end{equation}
since the combinatorial term counts the number of ways to get substrings of length $j$ while the term $(n - j - 1)!$ counts the $(n - j)!/(n - j)$ circular permutations of forbidden substrings of length $j$ and the remaining elements. Hence, by Equation (\ref{equ11}), we see that the right-hand side of Equation (\ref{equ23}) is just $d^{k-1}_{n-1}$ and the proof follows.
\end{proof}

For example, in $\{d^2_4\}$ we see that the forbidden substrings are $\{13, 24\}$.  Recall from [3] that we define a minimal forbidden substring as two consecutive elements $ik$. We assign to this minimal substring a length equal to one. Recall also that a forbidden substring of length $j$ can be considered as either a single substring of length $j$ or as $j$ overlapping substrings of length 1.

Hence for the term $j = 0$ in Equation \ref{equ23} we see that we can choose no forbidden substrings in $\binom{2}{0}$ ways and we can permute them circularly with the remaining elements 1,2,3,4 in $4!/4$ ways. For the term $j = 1$ we can choose 1 forbidden substring in $\binom{2}{1}$ ways and we can permute it circularly with the remaining elements in $3!/3$ ways (for example, choose the substring 13 and permute the blocks 13, 2, 4 circularly in $3!/3$ ways). For the term $j = 2$ we can choose 2 forbidden substrings in $\binom{2}{2}$ ways and we can permute them circularly with the remaining elements in $2!/2$ ways (that is, permute the blocks 13, 24 circularly in $1!$ way). Then we have that ${d^\ast}^k_n= n\cdot c = 4\cdot 3$, so there are 12 permutations in $S_4$ that avoid substrings $\{13, 24\}$ in circular 2-successions (for example, permutations such as 1342 and 3421 are not allowed).

Note that even though permutations in $\{d^k_n\}$ are not equidistributed, we gain this property when we forbid circular $k$-successions in $\{d^k_n\}$, as recorded in the following corollary, which follows from Proposition \ref{propo21}, ${d^\ast}^k_n=nd^{k-1}_{n-1}$.
\begin{cor}\label{coro22}
The classes in $\{{d^\ast}^k_n\}$ are equidistributed \textup{(}equinumerous\textup{)} and the cardinality of each class is given by $d^{k-1}_{n-1}$.
\end{cor}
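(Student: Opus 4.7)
The plan is to show that the corollary follows by combining Proposition \ref{propo21} with the fact that cyclic rotation acts freely on $\{{d^\ast}^k_n\}$ and permutes the classes transitively. Concretely, I would introduce the rotation map $\sigma : p(1)p(2)\cdots p(n) \mapsto p(2)p(3)\cdots p(n)p(1)$ and argue that $\sigma$ restricts to a bijection of $\{{d^\ast}^k_n\}$ to itself, after which the class cardinalities are easily compared.

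First I would verify that $\sigma$ preserves the no-circular-$k$-succession property. This is essentially tautological from Definition \ref{def1}: a permutation in $\{{d^\ast}^k_n\}$ is constrained by the $n$ consecutive cyclic pairs $p(i),p(i+1)$ together with the wrap-around pair $p(n),p(1)$, and $\sigma$ merely permutes these $n$ pairs without altering the multiset. In particular the forbidden substrings $j(j+k)$, $1 \le j \le n-k$, and the circular analogue defining the starred set are invariant under $\sigma$. Next, the orbits of $\sigma$ on $\{{d^\ast}^k_n\}$ all have size exactly $n$, because the $n$ iterates of any $p$ have $n$ different leading digits (since $p$ is a bijection on $[n]$). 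Consequently each orbit meets every class (indexed by its starting digit) in exactly one element, which provides a canonical bijection between any two classes. Therefore all $n$ classes have the same cardinality.

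To identify this common cardinality, I would invoke Proposition \ref{propo21}, which gives ${d^\ast}^k_n = n\, d^{k-1}_{n-1}$. Dividing the total by the $n$ classes yields $d^{k-1}_{n-1}$ elements per class, which is the claim. The only genuine obstacle is the first step, namely checking that $\sigma$ respects the circular-succession condition; since the defining condition is cyclic by construction, this reduces to an inspection of Definition \ref{def1}, and the orbit counting and arithmetic that follow are immediate.
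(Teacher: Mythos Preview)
Your argument is correct and is essentially the same as the paper's, just made explicit. The paper deduces Corollary~\ref{coro22} directly from Proposition~\ref{propo21}, whose proof already invokes the ``circular permutation is an $n$-to-$1$ mapping with respect to a linear one'' principle; your rotation map $\sigma$ simply unpacks that principle, and your final division by $n$ is exactly how the paper reads off the class cardinality $d^{k-1}_{n-1}$.
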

Note that the property of classes being equidistributed is not shared by all types of permutations with forbidden substrings. For example, $\{d_n\}$ and $\{d^k_n\}$ don't have this property, while $\{D_n\}$ and $\{D^k_n\}$ do.

Now we define ${c^\ast}^k_n$ as the number of circular permutations on $[n]$ (in cycle notation) that avoid substrings $j(j+k)$, $1 \leq j \leq n-k$, \textit{ie}. the same forbidden substrings as in $\{d^k_n\}$ and $\{{d^\ast}^k_n\}$.  We have the following corollary that counts such permutations.
\begin{cor}\label{coro23}
For fixed $k$, $1 \leq k \leq n-1$, the number of circular permutations ${c^\ast}^k_n$ on $[n]$ that avoid substrings $j(j+k)$, $1 \leq j \leq n-k$, is given by
\begin{equation}\label{equ24}
  {c^\ast}^k_n=\sum^{n-k}_{j=0}(-1)^j\binom{n-k}{j}(n-j-1)!.
\end{equation}
\end{cor}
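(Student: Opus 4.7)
The plan is to observe that the right-hand side of Equation (\ref{equ24}) is literally the class-size $c$ already computed inside the proof of Proposition \ref{propo21}, namely the quantity in Equation (\ref{equ23}). With that identification in place, the corollary reduces to verifying that ${c^\ast}^k_n = c$.

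The cleanest route to ${c^\ast}^k_n = c$ is the standard linear-to-circular correspondence. A circular permutation on $[n]$ admits exactly $n$ one-line representatives, obtained by cutting the cycle at each of its $n$ positions. Because the definition of $\{{d^\ast}^k_n\}$ treats the wrap-around pair $p(n), p(1)$ as a $k$-succession, cutting a circular permutation that avoids the forbidden substrings produces, at every cut, a one-line permutation in $\{{d^\ast}^k_n\}$; conversely, each permutation in $\{{d^\ast}^k_n\}$ determines a unique such circular permutation. The correspondence is therefore $n$-to-one, and ${c^\ast}^k_n = {d^\ast}^k_n/n = d^{k-1}_{n-1}$ by Proposition \ref{propo21}, which by Equation (\ref{equ11}) (with $n \to n-1$, $k \to k-1$) is exactly the right-hand side of (\ref{equ24}).

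Alternatively, one can derive (\ref{equ24}) directly by inclusion-exclusion on circular arrangements: for each subset of $j$ of the $n-k$ minimal forbidden pairs, glue each chosen pair into a single block, arrange the resulting $n-j$ objects in a circle in $(n-j-1)!$ ways, and sum over subsets with the usual alternating sign $(-1)^j$ weighted by $\binom{n-k}{j}$. The only delicate point, and the likely main obstacle if one takes this direct route, is confirming that every chosen subset of minimal substrings concatenates into well-defined blocks. This holds because the first coordinates $j$ and the second coordinates $j+k$ of the pairs $j(j+k)$, $1 \le j \le n-k$, are each injective, so overlapping chosen pairs can only share an endpoint and must then merge linearly into a single longer forbidden substring; the standard succession-type inclusion-exclusion argument (as in the proof of (\ref{equ11}) from [3]) then applies verbatim.
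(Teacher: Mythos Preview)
Your proposal is correct and follows essentially the same approach as the paper: the paper's proof of Corollary~\ref{coro23} is simply the one-line observation that ${c^\ast}^k_n$ equals the number $c$ from the proof of Proposition~\ref{propo21}, together with the relation ${d^\ast}^k_n=n\,{c^\ast}^k_n$. Your two routes---the $n$-to-$1$ linear-to-circular correspondence and the direct inclusion--exclusion on circular arrangements---are exactly the two ingredients already present inside the proof of Proposition~\ref{propo21} (the former justifies ${d^\ast}^k_n=n\cdot c$, the latter yields Equation~(\ref{equ23})), so you are unpacking in more detail what the paper leaves implicit.
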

\begin{proof}
This is the number $c$ from the proof of Proposition \ref{propo21}. Hence we may write ${d^\ast}^k_n=n{c^\ast}^k_n$ , where ${c^\ast}^k_n=d^{k-1}_{n-1}$.
\end{proof}
\newpage
For example, ${c^\ast}^2_5=d^1_4= 11$, so there are 11 circular permutations in $\{{c^\ast}^2_5\}$ that avoid substrings $\{13, 24, 35\}$. Furthermore, there are $5\cdot 11 = 55$ permutations in $\{{d^\ast}^2_5\}$ that avoid such substrings in circular 2-successions.

Similarly, ${c^\ast}^2_4=d^1_3= 3$, so there are 3 circular permutations that avoid\linebreak substrings $\{13, 24\}$. These are the permutations $\{(1234), (1423), (1432)\}$, and since $n = 4$, to each of these circular permutations correspond 4 permutations in $\{{d^\ast}^2_4\}$ with no circular 2-succession (for example the ones for (1234) are\linebreak $\{1234, 2341, 3412, 4123\}$).
\begin{cor}\label{coro24}
The number of circular permutations on $[n]$ that avoid substrings $j(j+1)$, $1 \leq j \leq n-1$, \textup{(}\textit{ie}. $k = 1$\textup{)} is given by
\begin{equation}\label{equ25}
  {c_n}^\ast=Der(n-1),
\end{equation}
where $Der(n)$ is the $n$th derangement number.
\end{cor}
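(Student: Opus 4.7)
The plan is essentially a one-line specialization. By Corollary \ref{coro23}, for every $k$ with $1 \leq k \leq n-1$ we have ${c^\ast}^k_n = d^{k-1}_{n-1}$. Setting $k=1$ gives ${c_n}^\ast = {c^\ast}^1_n = d^{0}_{n-1}$, so the only thing left is to identify $d^{0}_{n-1}$ with $Der(n-1)$. This identification is already made in the introduction, which remarks that for $k=0$, $d^k_n = Der(n)$, the $n$th derangement number.

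For a self-contained check, I would instead just substitute $k=1$ directly into Equation (\ref{equ24}), obtaining
\begin{equation*}
  {c_n}^\ast = \sum_{j=0}^{n-1} (-1)^j \binom{n-1}{j}(n-j-1)!,
\end{equation*}
and compare with the standard inclusion--exclusion formula
\begin{equation*}
  Der(n-1) = \sum_{j=0}^{n-1} (-1)^j \binom{n-1}{j}(n-1-j)!,
\end{equation*}
which are manifestly the same sum. Either route immediately yields ${c_n}^\ast = Der(n-1)$.

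There is no real obstacle here: the content of the corollary is the observation that specializing the parameters of Corollary \ref{coro23} and Equation (\ref{equ11}) to $k=1$ recovers the classical derangement formula. The small conceptual point worth flagging, if any, is the shift in index: the $k=1$ case of ${c^\ast}^k_n$ (avoiding the $n-1$ linear successions $j(j+1)$ with $1\le j\le n-1$) corresponds to $d^{0}_{n-1}$, i.e.\ to derangements on $n-1$ points rather than $n$ points, because forming a circular permutation collapses one rotational degree of freedom, reducing the effective size by one.
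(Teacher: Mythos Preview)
Your proposal is correct and matches the paper's own proof: both simply set $k=1$ in Equation~(\ref{equ24}) and recognize the resulting sum as the inclusion--exclusion formula for $Der(n-1)$. Your alternative route via ${c^\ast}^k_n = d^{k-1}_{n-1}$ from Corollary~\ref{coro23} together with $d^0_{n-1}=Der(n-1)$ is an equally valid (and arguably cleaner) phrasing of the same specialization.
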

\begin{proof}
Since we have that
\begin{equation}\label{equ26}
  Der(n)=\sum^{n}_{j=0}(-1)^j\binom{n}{j}(n-j)!
\end{equation}
and since Equation \ref{equ24} holds for $k = 1$, the result is immediate $({c^\ast}^1_n={c^\ast}_n)$.
\end{proof}

For example, for $n = 3$, $k = 1$, the number of circular permutations that avoid substrings $\{12, 23\}$ is $Der(2) = 1$ and this circular permutation is $(132)$, which corresponds to the 3 permutations $\{132, 213, 321\}$ in $\{{d^\ast}_3\}$.

Tables \ref{tab1} - \ref{tab3} in the Appendix show some values for $d^k_n$, ${d^\ast}^k_n$, and ${c^\ast}^k_n$.

\subsection{Results for permutations in $\boldsymbol{\{{D^\ast}^k_n\}}$}

As in the case of $D^k_n$ , the numbers ${D^\ast}^k_n$ are more difficult to get. As opposed to permutations in $\{{d^\ast}^k_n\}$, permutations in $\{{D^\ast}^k_n\}$ avoid circular $k$-successions starting with all digits since a total of $n$ substrings are forbidden (as opposed to $n-k$ forbidden substrings in $\{{d^\ast}^k_n\}$).

We first count the number of permutations in $\{D^\ast_n\}$, (\textit{ie}. $k =1$). We have the following proposition.

\begin{prop}\label{propo25}
The number of permutations in $\{{D^\ast}_n\}$ is given by
\begin{equation}\label{equ27}
  {D^\ast}_n=n\left[\sum^{n-1}_{j=0}(-1)^j\binom{n}{j}(n-j-1)!+(-1)^n\right].
\end{equation}
\end{prop}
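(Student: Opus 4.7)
The plan is to adapt the strategy of Proposition \ref{propo21}, obtaining the factor of $n$ by a linear-to-circular correspondence and then evaluating the resulting circular count by inclusion-exclusion. First I would observe that since $\{{D^\ast}_n\}$ forbids the circular succession $p(n)p(1)$ in addition to the linear ones, a linear permutation $p$ lies in $\{{D^\ast}_n\}$ if and only if its underlying cyclic arrangement avoids all $n$ forbidden substrings $j(j+1)\ (\text{mod }n)$. Since every circular arrangement on $[n]$ has exactly $n$ linearizations (one per choice of starting position), we obtain ${D^\ast}_n = n\cdot C^\ast_n$, where $C^\ast_n$ denotes the number of circular arrangements of $[n]$ avoiding all $n$ such substrings.

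Next I would compute $C^\ast_n$ by inclusion-exclusion over the $n$ forbidden substrings. Viewing each substring $j(j+1)\ (\text{mod }n)$ as a directed edge $j\to j+1$ in the cycle graph $C_n$, any choice of $j<n$ forbidden substrings corresponds to a proper subset of edges of $C_n$ and hence to a disjoint union of paths that partitions $[n]$ into $n-j$ arcs. Forcing all $j$ substrings to occur requires each arc to appear as a rigid consecutive block, leaving $n-j$ blocks to arrange on a circle, which can be done in $(n-j-1)!$ ways. Because this count depends only on $j$, each of the $\binom{n}{j}$ subsets of size $j$ contributes equally, giving the level-$j$ contribution $(-1)^j\binom{n}{j}(n-j-1)!$.

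The main obstacle is the boundary case $j=n$, where the general recipe would formally demand the ill-defined $(-1)!$. What rescues the argument is that when all $n$ edges are chosen the resulting subgraph is the entire cycle, so there is exactly one circular arrangement containing all $n$ forbidden substrings simultaneously---namely $(1,2,\ldots,n)$ itself---contributing $(-1)^n$ to the sum. Combining the two cases yields
\begin{equation*}
    C^\ast_n = \sum_{j=0}^{n-1}(-1)^j\binom{n}{j}(n-j-1)! + (-1)^n,
\end{equation*}
and multiplying by $n$ gives the formula (\ref{equ27}).
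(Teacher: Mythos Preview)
Your proposal is correct and follows essentially the same approach as the paper: reduce to counting circular arrangements via the $n$-to-$1$ linear--circular correspondence, apply inclusion--exclusion over the $n$ forbidden substrings, and treat the exceptional case $j=n$ separately as the single circular permutation $(12\ldots n)$. Your write-up is in fact more explicit than the paper's about why any $j<n$ chosen substrings yield exactly $n-j$ blocks (via the arc decomposition of the cycle graph), but the underlying argument is identical.
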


\begin{proof}
As in the proof of Proposition \ref{propo21}, we first obtain the number of circular permutations without forbidden substrings. It is easy to see that the number of such permutations is given by
\begin{equation}\label{equ28}
  C=\sum^{n-1}_{j=0}(-1)^j\binom{n}{j}(n-j-1)!+(-1)^n
\end{equation}
since the combinatorial term counts the number of ways to get forbidden\linebreak substrings while the term $(n - j-1)!$ counts the $(n-j)!/(n-j)$ circular permutations of forbidden substrings and the remaining elements. The result is then obtained by inclusion-exclusion, where for the term $j = n$, we note that the only way to get an $n$-element substring is by the circular permutation $(12\ldots n)$.
\end{proof}

For example, the forbidden substrings in $\{{D^\ast}_4\}$ are $\{12, 23, 34; 41\}$. It is easy to count, for instance, that there are $\binom{4}{2}$ forbidden substrings of length $j = 2$ and $2!/2$ circular permutations of these substrings and the remaining elements. For example, a substring of length 2 is given by 123 and we count\linebreak $2!/2 = 1!$ circular permutation of the blocks 123, 4. Another substring of length 2 (alternatively, two substrings of length 1) is given by 12 34 and we also count $2!/2 = 1!$ circular permutation of these two blocks.
Furthermore, the only way to get $j = 4$ forbidden substrings is by the circular permutation $(1234)$, which produces the forbidden substrings $\{12, 23, 34; 41\}$. Hence $C = 1$ by Equation \ref{equ28} and by Equation \ref{equ27} we have that ${D^\ast}_4= 4$, which counts the permutations $\{1432, 2143, 3214, 4321\}$ in $S_4$ that have no circular succession, (\textit{ie}. no circular 1-succession) and avoid substrings $\{12, 23, 34; 41\}$.

If we now move on to permutations without $k$-successions that avoid substrings $j(j+k)\ (\text{mod }n)$ for $1 \leq j \leq n$, we see that as in the case of $\{D^k_n\}$, the number of permutations in $\{{D^\ast}^k_n\}$ depends on whether $n$ is prime, and more generally, on whether $n$ and $k$ are relatively prime. Since the proof follows closely the one for $\{D^k_n\}$, we refer the reader to [3] for the details.

We will only record the main counting result in the proposition below.

\begin{prop}\label{propo26}
The number of   permutations in $\{{D^\ast}^k_n\}$ with $n$ relative prime to $k$, $n \geq 3$, $k < n$, is the same as the number of permutations in $\{{D^\ast}_n\}$.	
\end{prop}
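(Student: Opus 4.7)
The plan is to construct an explicit bijection $\Phi : \{{D^\ast}_n\} \to \{{D^\ast}^k_n\}$ driven by multiplication by $k$ modulo $n$, exactly mirroring the argument in [3] for the non-starred sets $\{D_n\}$ and $\{D^k_n\}$. Since $\gcd(n,k)=1$, the map $j \mapsto kj\ (\text{mod }n)$, with the convention that $n$ replaces $0$, is a permutation of $[n]$. For $p \in \{{D^\ast}_n\}$, define $\Phi(p) = q$ by $q(i) \equiv k \cdot p(i)\ (\text{mod }n)$. Then $q$ is a permutation of $[n]$, with inverse given by multiplying each entry by $k^{-1}\ (\text{mod }n)$.

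Next I would check that forbidden substrings correspond correctly. For any adjacent positions $i,\ i+1$, the congruence $q(i+1) \equiv q(i) + k\ (\text{mod }n)$ is equivalent, after cancelling the unit $k$, to $p(i+1) \equiv p(i) + 1\ (\text{mod }n)$. Hence a linear $k$-succession in $q$ at positions $(i,i+1)$ occurs if and only if a linear $1$-succession in $p$ occurs at the same positions; since $p$ avoids all such, so does $q$.

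The one new point that distinguishes this proof from the argument for $\{D^k_n\}$ in [3] is the circular condition. Applying the identical cancellation to the wrap-around pair $(p(n),p(1))$ versus $(q(n),q(1))$ shows that a circular $k$-succession in $q$ occurs iff a circular $1$-succession occurs in $p$, which is again forbidden. Hence $\Phi$ lands in $\{{D^\ast}^k_n\}$, and applying the same reasoning to $\Phi^{-1}$ shows it is a bijection, giving ${D^\ast}^k_n = {D^\ast}_n$. I do not expect a real obstacle: the condition defining a forbidden pair is purely additive in the fixed difference $k$, so multiplication by a unit translates offsets of $1$ uniformly into offsets of $k$, and the single algebraic equivalence above simultaneously handles all $n-1$ linear adjacencies and the one circular adjacency. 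The only minor care point is keeping the convention $n \leftrightarrow 0$ consistent throughout.
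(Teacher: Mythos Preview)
Your bijection is correct: since $\gcd(n,k)=1$, entrywise multiplication by $k$ modulo $n$ is a permutation of $[n]$, and the single congruence $q(i{+}1)\equiv q(i)+k$ reduces, after cancelling the unit $k$, to $p(i{+}1)\equiv p(i)+1$; the same cancellation handles the wrap-around pair $(p(n),p(1))$, so $\Phi$ and $\Phi^{-1}$ exchange $\{{D^\ast}_n\}$ and $\{{D^\ast}^k_n\}$ as claimed.

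This is, however, a different route from the one the paper indicates. The paper does not build a bijection; it invokes the argument of [3], whose key lemma is that when $(n,k)=1$ there exist forbidden substrings of \emph{every} length $j=0,1,\ldots,n$. That structural fact is exactly what makes the inclusion--exclusion count in Equations~(\ref{equ27})--(\ref{equ28}) go through term-by-term for general $k$ just as it does for $k=1$, yielding the same numerical value. Your approach is more direct and explains \emph{why} the two sets match, without passing through an alternating sum; the paper's approach, by contrast, simultaneously produces the explicit formula and makes visible what breaks when $(n,k)>1$ (namely, long forbidden substrings cease to exist, so some terms of the sum must be altered). One small caveat: you describe your map as ``exactly mirroring the argument in [3]'', but the paper's summary of [3] points to the substring-length lemma rather than to a bijection, so you may be supplying a genuinely new proof rather than reproducing the cited one.
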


The key result in the proof of the proposition is that, if $(n,k) = 1$, there are forbidden substrings of all lengths in $\{D^k_n\}$ (see [3]).  But forbidden substrings are the same in $\{D^k_n\}$ and in $\{{D^\ast}^k_n\}$, so in this case there will also exist forbidden substrings of all lengths in $\{{D^\ast}^k_n\}$.

The proposition implies the following corollary:

\begin{cor}\label{coro27}
The number of permutations in $\{{D^\ast}^k_p\}$, $k = 1, 2, \ldots,  p-1$, is the same as the number of permutations in $\{{D^\ast}_p\}$, $p$ prime, $p \geq 3$.
\end{cor}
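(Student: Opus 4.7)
The plan is to derive Corollary \ref{coro27} as an immediate consequence of Proposition \ref{propo26}. First I would note the elementary number-theoretic fact that when $p$ is prime, every integer $k$ with $1 \leq k \leq p-1$ satisfies $\gcd(p,k) = 1$, since the only positive divisors of $p$ are $1$ and $p$ itself, and $k < p$ rules out $k = p$. So each such $k$ meets the hypothesis of Proposition \ref{propo26} with $n = p$, namely $n$ relatively prime to $k$, $n \geq 3$, and $k < n$.

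Next I would apply Proposition \ref{propo26} with $n = p$ for each $k \in \{1, 2, \ldots, p-1\}$ in turn. The proposition yields $|\{{D^\ast}^k_p\}| = |\{{D^\ast}_p\}|$, and since the right-hand side does not depend on $k$, all the sets $\{{D^\ast}^k_p\}$ have the same cardinality, given by the formula in Proposition \ref{propo25}. No genuine obstacle arises here: the corollary is really just a restatement of Proposition \ref{propo26} in the sharper regime where $n$ is prime, the point being that primality forces \emph{every} admissible $k$ to satisfy the coprimality hypothesis simultaneously.

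I would close with a brief remark on sharpness: primality is essential in the sense that for composite $n$ there exist $k$ with $\gcd(n,k) > 1$, and for such $k$ the count $|\{{D^\ast}^k_n\}|$ can differ from $|\{{D^\ast}_n\}|$, as already observed in the analogous situation for $\{D^k_n\}$ in [3] (because in the non-coprime case one no longer has forbidden substrings of all lengths, which was the key ingredient quoted after Proposition \ref{propo26}). Thus Corollary \ref{coro27} records precisely the regime in which the stratification by $k$ collapses to a single value.
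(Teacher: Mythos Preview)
Your proof is correct and follows exactly the paper's approach: the paper's entire proof is the single line ``$(p,k) = 1$, $k = 1, 2, \ldots, p-1$,'' which is precisely your observation that primality forces coprimality for every admissible $k$, after which Proposition \ref{propo26} applies. Your additional remarks on sharpness are reasonable but go beyond what the paper records.
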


\begin{proof}
$(p,k) = 1$, $k = 1, 2, \ldots, p-1$.
\end{proof}

Note that Proposition \ref{propo26} is not true if $n$ is not relative prime to $k$, for example in $\{{D^\ast}^2_4\}$. In this case the forbidden substrings are $\{13, 24; 31, 42\}$, and we can't get substrings of lengths 3 or 4. On the other hand, since $(4,3) = 1$, we see that forbidden substrings in $\{{D^\ast}^3_4\}$ are $\{14; 21, 32, 43\}$, and we can get substrings of all lengths. For example, 2143 is a substring of length 3 (recall that the unit forbidden substring $ik$ has length one, hence the length of forbidden substrings will be one less than the number of elements), and a substring of length 4 (alternatively, 4 substrings of length 1) is given by the circular permutation (1432). We then have that \mbox{${D^\ast}_4=4$} by Equation \ref{equ27} and hence ${D^\ast}^3_4=4$ by Proposition \ref{propo26}.

As in the case of $\{{d^\ast}^k_n\}$, permutations in $\{{D^\ast}^k_n\}$ are equidistributed, as established by the following corollary:\\[0.1em]
\begin{cor}\label{coro28}
The classes in $\{{D^\ast}^k_n\}$ are equidistributed \textup{(}equinumerous\textup{)} and the cardinality of each class is given by the term $C$ in Equation \ref{equ28}.
\end{cor}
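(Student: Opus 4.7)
The plan is to mirror the argument of Proposition \ref{propo25} but refine the bookkeeping so as to keep track of the starting digit of the linear permutation. The key observation is that the forbidden set $\{j(j+k)\ (\text{mod }n):1\le j\le n\}$ defining $\{{D^\ast}^k_n\}$ is symmetric in all digits of $[n]$ and is invariant under cyclic rotation of the one-line word. This suggests packaging the count through circular permutations and then unfolding each one into a linear permutation in exactly one way per starting digit.

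Concretely, I would introduce $\{{C^\ast}^k_n\}$, the set of circular permutations on $[n]$ (in cycle notation) whose cyclic adjacencies avoid all pairs $j(j+k)\ (\text{mod }n)$. The inclusion--exclusion argument already run in the proof of Proposition \ref{propo25} shows that $|\{{C^\ast}^k_n\}|$ equals $C$, the bracketed expression of Equation \ref{equ28}. Then, for each fixed $i\in[n]$, I would define a map $\Phi_i$ from $\{{C^\ast}^k_n\}$ to the subset of $\{{D^\ast}^k_n\}$ consisting of permutations beginning with $i$, by cutting the cycle at the unique position occupied by the element $i$ and reading linearly from there. The map is well-defined because each digit appears exactly once in a cycle on $[n]$; it is injective because closing the resulting linear word back up recovers the original cycle; and it is surjective because any linear permutation in $\{{D^\ast}^k_n\}$ starting with $i$ closes up into a circular permutation whose cyclic adjacencies (including the wrap-around pair $(p(n),p(1))$, which by Definition \ref{def1} is precisely where circular $k$-successions live) avoid the forbidden set. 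Thus each class has exactly $C$ elements, establishing both equidistribution and the cardinality claim.

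I do not anticipate a substantial obstacle; the one spot needing care is the verification that the wrap-around pair of a linear permutation corresponds exactly to one cyclic adjacency of the associated cycle, so that the condition ``avoids substrings $j(j+k)\ (\text{mod }n)$ for $1\le j\le n$ and has no circular $k$-succession'' on the linear side matches ``avoids forbidden cyclic adjacencies'' on the circular side. Once this identification is in place, the corollary follows at once from the $n$-to-$1$ nature of the circular-to-linear correspondence, with the $n$ preimages distributed one per starting digit, giving $|\{{D^\ast}^k_n\}|=nC$ decomposed as $n$ equal classes.
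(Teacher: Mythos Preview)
Your approach is essentially the same as the paper's: the paper's one-line proof simply invokes Equation~\ref{equ27} from Proposition~\ref{propo25}, whose factor of $n$ comes precisely from the $n$-to-$1$ circular-to-linear correspondence that you spell out explicitly via the maps $\Phi_i$. Your version is more detailed but not different in substance; the only point to tighten is that the identity $|\{{C^\ast}^k_n\}|=C$ from Equation~\ref{equ28} is established in the paper only for $(n,k)=1$ (Proposition~\ref{propo26} and Corollary~\ref{coro29}), so your citation of Proposition~\ref{propo25} alone for general $k$ should be supplemented by that hypothesis---though your bijection $\Phi_i$ itself correctly gives equidistribution regardless of $\gcd(n,k)$.
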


\begin{proof}
Equation \ref{equ27} in Proposition \ref{propo25}.
\end{proof}

We now define ${C^\ast}^k_n$ as the number of circular permutations on $[n]$ (in cycle notation)  that avoid substrings $j(j+k)\ (\text{mod }n)$ for $1 \leq j \leq n$, \textit{ie}. the same substrings as in $\{D^k_n\}$ and $\{{D^\ast}^k_n\}$. We have the following corollary that counts such permutations for $n$ and $k$ relatively prime.\\[0.1em]
\begin{cor}\label{coro29}
For $(n,k) = 1$, the number of circular permutations ${C^\ast}^k_n$ on $[n]$ that avoid substrings $j(j+k)\ (\text{mod }n)$ for $1 \leq j \leq n$, is given by
\begin{equation}\label{equ29}
  {C^\ast}^k_n=\sum^{n-1}_{j=0}(-1)^j\binom{n}{j}(n-j-1)!+(-1)^n.
\end{equation}
\end{cor}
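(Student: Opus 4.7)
The plan is to argue in direct parallel with Corollary \ref{coro23}, reusing the quantity $C$ introduced in the proof of Proposition \ref{propo25} together with Proposition \ref{propo26}. Observe that the right-hand side of Equation \ref{equ29} is literally the expression $C$ from Equation \ref{equ28}, so the whole statement reduces to establishing the identity ${C^\ast}^k_n = C$ under the hypothesis $(n,k)=1$.

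First I would set up the $n$-to-$1$ correspondence ${D^\ast}^k_n = n\,{C^\ast}^k_n$. Every circular permutation in $\{{C^\ast}^k_n\}$ yields $n$ linear permutations in $\{{D^\ast}^k_n\}$ by choosing any one of its $n$ positions as the starting entry, and conversely each element of $\{{D^\ast}^k_n\}$ arises from a unique such choice. The correspondence is well-defined because the $n$ cyclic shifts of any linear permutation are pairwise distinct (the entries are distinct), and the defining condition of $\{{D^\ast}^k_n\}$ -- absence of the forbidden unit substrings together with the circular $k$-succession condition -- depends only on the cyclic sequence of values and is therefore preserved under cyclic shifts.

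Once this reduction is in place, the remainder of the proof is essentially a citation. Under $(n,k)=1$, Proposition \ref{propo26} gives ${D^\ast}^k_n = {D^\ast}_n$, and Proposition \ref{propo25} rewrites the latter as $nC$; cancelling the common factor $n$ from $n\,{C^\ast}^k_n = nC$ delivers ${C^\ast}^k_n = C$, which is Equation \ref{equ29}. The only step that is not a pure citation is pinpointing where coprimality enters: it is precisely the condition under which the $n$ forbidden unit successions $j \to j+k \pmod n$ concatenate into a single Hamilton cycle on $[n]$, which is what produces the lone $(-1)^n$ correction coming from the $j=n$ term of inclusion-exclusion. I expect this to be the main obstacle to watch, since without $(n,k)=1$ the succession graph splits into $(n,k)$ shorter cycles, the $j=n$ contribution disappears, and the formula changes shape -- explaining both the role of the hypothesis and why the corollary is stated in exactly this generality.
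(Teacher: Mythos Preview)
Your argument is correct. The only difference from the paper's proof is one of packaging: the paper argues directly on circular permutations, observing that the forbidden substrings in $\{{C^\ast}^k_n\}$ coincide with those in $\{{D^\ast}^k_n\}$ and that, since $(n,k)=1$, forbidden substrings of every length $j=0,1,\ldots,n$ occur, so the inclusion--exclusion computation producing $C$ in Equation~\ref{equ28} goes through verbatim. You instead route everything through the linear count by first establishing ${D^\ast}^k_n = n\,{C^\ast}^k_n$ and then chaining Proposition~\ref{propo26} with Proposition~\ref{propo25} to cancel the factor $n$. Your Hamilton-cycle remark is exactly the paper's ``forbidden substrings of all lengths'' observation, so the substantive content coincides; your version has the mild advantage of making the dependence on the earlier propositions fully explicit, while the paper's is a shade more self-contained.
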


\begin{proof}
This is the number $C$ from the proof of Proposition \ref{propo25}. Since forbidden substrings in $\{{C^\ast}^k_n\}$ are the same as those in $\{{D^\ast}^k_n\}$ and since\linebreak $(n,k) = 1$, there are forbidden substrings of all lengths $j =0, 1, 2, \ldots, n-1,n$,  so the sum in Equation \ref{equ29} applies.
\end{proof}

For example, since $(4,3) = 1$, we use Equation \ref{equ29} to compute ${C^\ast}^3_4 = 1$, so there is only one circular permutation that avoids substrings $\{14; 21, 32, 43\}$. This circular permutation is $(1234)$, which corresponds to the four permutations $\{1234, 2341, 3412, 4123\}$ in $\{{D^\ast}^3_4\}$ with no circular 3-succession. Since $(4,1) = 1$, we also have ${C^\ast}^1_4 ={C^\ast}_4 = 1$ by Equation \ref{equ29}, and the circular permutation that avoids substrings $\{12, 23, 34; 41\}$ is $(1432)$, which corresponds to the\linebreak permutations $\{1432, 2143, 3214, 4321\}$ in $\{{D^\ast}_4\}$, as seen above.

We have that the \textit{gcd} of $n$ and $k$ is very important to determine the number of permutations in $\{{C^\ast}^k_n\}$.  In fact, if for the same $n$ we have that $(n, k_1) = (n, k_2)$, then
$\{{C^\ast}^{k_1}_n\}$ and $\{{C^\ast}^{k_2}_n\}$ will have the same number of permutations. The same is true for $\{{D^\ast}^k_n\}$.

Note that usually ${C^\ast}^k_n\neq {c^\ast}^k_n$ except for the case $n = 3$ (and trivially for $n = 2$). Indeed, we have that ${C^\ast}_3={c^\ast}_3= 1$ since both numbers count the permutation (132), and ${C^\ast}^2_3={c^\ast}^2_3= 1$ since both numbers count the permutation $(123)$. This in turn implies that ${D^\ast}_3={d^\ast}_3= 3$ and ${D^\ast}^2_3={d^\ast}^2_3= 3$.

As a final remark for the case $k = 1$ in circular permutations, recall that\linebreak ${c^\ast}^1_n={c^\ast}_n$, and this counts the number of circular permutations that avoid substrings  $j(j+1)$, $1 \leq j \leq n-1$; \textit{ie}. substrings ${12, 23, \ldots, (n-1)n}$. By Corollary \ref{coro24}, ${c_n}^\ast= Der(n-1)$, so for example, for $n = 4$, we have that ${c^\ast}_4= Der(3) = 2$, and the 2 permutations that avoid such substrings are $\{(1324), (1432)\}$.

On the other hand, ${C^\ast}^1_n={C^\ast}_n$, and this counts the number of permutations that avoid substrings $j(j+1)\ (\text{mod }n)$ for $1 \leq j \leq n$; \textit{ie}. substrings\linebreak $\{12, 23, \ldots, (n-1)n, n1\}$. By Corollary \ref{coro29}, since $(4,1) = 1$, we have seen that ${C^\ast}^1_4 = 1$ and the single permutation that avoids such substrings is $(1432)$ (the permutation $(1324)$ from $\{{c^\ast}_4\}$ is excluded since it has the succession 41, which is forbidden in $\{{C^\ast}_4\}$).

Note that while in some references such as [1] ${C^\ast}_n$ is referred to as \textquotedblleft the number of circular permutations without a succession\textquotedblright, we also consider circular\linebreak permutations without a succession in $\{{c^\ast}_n\}$ on the smaller subset of forbidden substrings $\{12, 23, \ldots, (n-1)n\}$. Furthermore, the same reference counts circular permutations with $i = 1, 2, \ldots, n-1$ successions (the so-called \textquotedblleft circular\linebreak succession numbers\textquotedblright) but it does it only for forbidden substrings that are one spacing apart, $j(j+k)$, $k=1$.

For $k > 1$, this note generalizes the enumeration of circular permutations without $k$-successions (or $k$-shifts) for both kinds of forbidden substrings (above the diagonal of an $n\times n$ chessboard in $\{{c^\ast}^k_n\}$ and both above and below the diagonal, \textit{ie}. $(\text{mod }n)$ in $\{{C^\ast}^k_n\}$). It also enumerates the corresponding\linebreak permutations in one-line notation in $\{{d^\ast}^k_n\}$ and $\{{D^\ast}^k_n\}$.

For further references, the sequence $\langle {C^\ast}_n\rangle$ is A000757 in OEIS [7], $\langle {D^\ast}_n\rangle$ is A167760 [8], $\langle {d^\ast}_n\rangle$ is A000240 [6], and $\langle {c^\ast}_n\rangle$ can be looked up in the derangement numbers A000166 [9] due to Corollary \ref{coro24}. Note that A000240 not only counts the number of permutations of $[n]$ having no circular succession, ${d^\ast}_n$, but \linebreak also the number of permutations $D_n$ on $[n]$ having no substring in\linebreak $\{12, 23,  \ldots, (n-1)n, n1\}$, as well as the number of permutations of $[n]$ having exactly one fixed point (see [2]).

Tables \ref{tab4} - \ref{tab6} in the Appendix show some values for $D^k_n$, ${D^\ast}^k_n$, and ${C^\ast}^k_n$.

\newpage
\section*{References}

{\small
[1] C. Charalambides, Enumerative Combinatorics, CRC Press, 2002.

[2] E. Navarrete, Forbidden Patterns and the Alternating Derangement Sequence,
\hspace*{0.5cm}arXiv:1610.01987 [math.CO], 2016.

[3] E. Navarrete, Generalized $K$-Shift Forbidden Substrings in Permutations, \\
\hspace*{0.5cm}arXiv:1610.06217 [math.CO], 2016.

[4] N. J. A. Sloane, The On-Line Encyclopedia of Integer Sequences, published electronically
\hspace*{0.5cm}at http://oeis.org, sequence A277563.

[5] N. J. A. Sloane, The On-Line Encyclopedia of Integer Sequences, published electronically
\hspace*{0.5cm}at http://oeis.org, sequence A000255.

[6] N. J. A. Sloane, The On-Line Encyclopedia of Integer Sequences, published electronically
\hspace*{0.5cm}at http://oeis.org, sequence A000240.

[7] N. J. A. Sloane, The On-Line Encyclopedia of Integer Sequences, published electronically
\hspace*{0.5cm}at http://oeis.org, sequence A000757.

[8] N. J. A. Sloane, The On-Line Encyclopedia of Integer Sequences, published electronically
\hspace*{0.5cm}at http://oeis.org, sequence A167760.

[9] N. J. A. Sloane, The On-Line Encyclopedia of Integer Sequences, published electronically
\hspace*{0.5cm}at http://oeis.org, sequence A000166.

}
\begin{center}
{\Large \textbf{APPENDIX}}
\end{center}
\appendix
\begin{table}[h!]
  \centering
  \begin{tabular}{|c|r|r|r|r|r|r|}
     \hline
      \hspace{0.37cm}$n$\hspace{0.37cm} & \hspace{0.18cm}$Der_n$\hspace{0.18cm} & \hspace{0.38cm}$d_n$\hspace{0.38cm} & \hspace{0.38cm}$d^2_n$\hspace{0.38cm} & \hspace{0.38cm}$d^3_n$\hspace{0.38cm} & \hspace{0.38cm}$d^4_n$\hspace{0.38cm} & \hspace{0.38cm}$d^5_n$\hspace{0.38cm} \\
     \hline
     1& 0& & & & &\\
     2& 1& 1& & & &\\
     3& 2& 3& 4& & &\\
     4& 9& 11& 14& 18& &\\
     5& 44& 53& 64& 78& 96&\\
     6& 265& 309& 362& 426& 504& 600\\
     7& 1,854& 2,119& 2,428& 2,790& 3,216& 3,720\\
     8& 14,833& 16,687& 18,806& 21,234& 24,024& 27,240\\
     \hline
   \end{tabular}
  \caption{Some values for $d_n^k$.}\label{tab1}
\end{table}

\begin{table}[h!]
  \centering
  \begin{tabular}{|c|r|r|r|r|r|r|}
     \hline
      \hspace{0.37cm}$n$\hspace{0.37cm} & \hspace{0.18cm}${d_n}^\ast$\hspace{0.18cm} & \hspace{0.38cm}${d^\ast}^2_n$\hspace{0.38cm} & \hspace{0.38cm}${d^\ast}^3_n$\hspace{0.38cm} & \hspace{0.38cm}${d^\ast}^4_n$\hspace{0.38cm} & \hspace{0.38cm}${d^\ast}^5_n$\hspace{0.38cm} & \hspace{0.38cm}${d^\ast}^6_n$\hspace{0.38cm} \\
     \hline
     1& & & & & &\\
     2& 0& & & & &\\
     3& 3& 3& & & &\\
     4& 8& 12& 16& & &\\
     5& 45& 55& 70& 90& &\\
     6& 264& 318& 384& 468& 576&\\
     7& 1,855& 2,163& 2,534& 2,982& 3,528& 4,200\\
     8& 14,832& 16,952& 19,424& 22,320& 25,728& 29,760\\
     \hline
   \end{tabular}
  \caption{Some values for ${d^\ast}^k_n$.}\label{tab2}
\end{table}

\begin{table}[h!]
  \centering
  \begin{tabular}{|c|r|r|r|r|r|r|}
     \hline
      \hspace{0.37cm}$n$\hspace{0.37cm} & \hspace{0.18cm}${c^\ast}_n$\hspace{0.18cm} & \hspace{0.38cm}${c^\ast}^2_n$\hspace{0.38cm} & \hspace{0.38cm}${c^\ast}^3_n$\hspace{0.38cm} & \hspace{0.38cm}${c^\ast}^4_n$\hspace{0.38cm} & \hspace{0.38cm}${c^\ast}^5_n$\hspace{0.38cm} & \hspace{0.38cm}${c^\ast}^6_n$\hspace{0.38cm} \\
     \hline
     1& & & & & &\\
     2& 0& & & & &\\
     3& 1& 1& & & &\\
     4& 2& 3& 4& & &\\
     5& 9& 11& 14& 18& &\\
     6& 44& 53& 64& 78& 96&\\
     7& 265& 309& 362& 426& 504& 600\\
     8& 1,854& 2,119& 2,428& 2,790& 3,216& 3,720\\
     \hline
   \end{tabular}
  \caption{Some values for ${c^\ast}^k_n$.}\label{tab3}
\end{table}

\begin{table}[h!]
  \centering
  \begin{tabular}{|c|r|r|r|r|r|r|}
     \cline{2-7}
      \multicolumn{1}{c|}{} & \hspace{0.1cm}$k=1$\hspace{0.1cm} & \hspace{0.2cm}$k=2$\hspace{0.2cm} & \hspace{0.2cm}$k=3$\hspace{0.2cm} & \hspace{0.2cm}$k=4$\hspace{0.2cm} & \hspace{0.2cm}$k=5$\hspace{0.2cm} & \hspace{0.2cm}$k=6$\hspace{0.2cm} \\
     \hline
     $n=2$& 0& & & & &\\
     $n=3$& 3& 3& & & &\\
     $n=4$& 8& 8& 8& & &\\
     $n=5$& 45& 45& 45& 45& &\\
     $n=6$& 264& 270& 240& 270& 264&\\
     $n=7$& 1,855& 1,855& 1,855& 1,855& 1,855& 1,855\\
     \hline
   \end{tabular}
  \caption{Some values for $D^k_n$.}\label{tab4}
\end{table}

\begin{table}[h!]
  \centering
  \begin{tabular}{|c|r|r|r|r|r|r|}
     \cline{2-7}
      \multicolumn{1}{c|}{} & \hspace{0.1cm}$k=1$\hspace{0.1cm} & \hspace{0.2cm}$k=2$\hspace{0.2cm} & \hspace{0.2cm}$k=3$\hspace{0.2cm} & \hspace{0.2cm}$k=4$\hspace{0.2cm} & \hspace{0.2cm}$k=5$\hspace{0.2cm} & \hspace{0.2cm}$k=6$\hspace{0.2cm} \\
     \hline
     $n=2$& 0& & & & &\\
     $n=3$& 3& 3& & & &\\
     $n=4$& 4& 8& 4& & &\\
     $n=5$& 40& 40& 40& 40& &\\
     $n=6$& 216& 234& 192& 234& 216&\\
     $n=7$& 1,603& 1,603& 1,603& 1,603& 1,603& 1,603\\
     \hline
   \end{tabular}
  \caption{Some values for ${D^\ast}^k_n$.}\label{tab5}
\end{table}

\begin{table}[h!]
  \centering
  \begin{tabular}{|c|r|r|r|r|r|r|}
     \cline{2-7}
      \multicolumn{1}{c|}{} & \hspace{0.1cm}$k=1$\hspace{0.1cm} & \hspace{0.2cm}$k=2$\hspace{0.2cm} & \hspace{0.2cm}$k=3$\hspace{0.2cm} & \hspace{0.2cm}$k=4$\hspace{0.2cm} & \hspace{0.2cm}$k=5$\hspace{0.2cm} & \hspace{0.2cm}$k=6$\hspace{0.2cm} \\
     \hline
     $n=2$& 0& & & & &\\
     $n=3$& 1& 1& & & &\\
     $n=4$& 1& 2& 1& & &\\
     $n=5$& 8& 8& 8& 8& &\\
     $n=6$& 36& 39& 32& 39& 36&\\
     $n=7$& 229& 229& 229& 229& 229& 229\\
     \hline
   \end{tabular}
  \caption{Some values for ${C^\ast}^k_n$.}\label{tab6}
\end{table}
{\color{white}
.

.

.

.

.

.
}
\end{document}